\documentclass[12pt]{amsart}
\usepackage{amsmath,amssymb,amsfonts,a4wide}
\usepackage{epsfig}


\def\be{\begin{equation}}
\def\ee{\end{equation}}
\def\lbl{\label}

\setlength{\parskip}{10pt plus 2pt minus 1pt}

\newtheorem {thm}{Theorem}

\newtheorem {pr}[thm]{Proposition}

\def\E{{\mathbb E}}
\def\P{{\mathbb P}}
\def\eps{\epsilon}

\begin{document}

\title[Extremes  of sequences defined by random difference equations]{Convergence to type I distribution of the extremes of
sequences defined by random difference equation}
\author[Pawe{\l} Hitczenko]{Pawe{\l} Hitczenko$^\dag{}$}
\address{Pawe{\l} Hitczenko\\
Departments of Mathematics and Computer Science  \\
Drexel University\\
Philadelphia, PA 19104 \\
U.S.A}
\email{phitczenko@math.drexel.edu}
\urladdr{
http://www.math.drexel.edu/$\sim$phitczen}

\thanks{$^\dag{}$ Supported in part by the NSA grant  \#H98230-09-1-0062}

\date{\today}
\subjclass{Primary:  60G70; secondary: 60F05 }
\keywords{random difference equation, convergence in distribution,
  extreme value}

\begin{abstract}
We study the extremes of a sequence of random variables  $(R_n)$ defined by the recurrence  $R_n=M_nR_{n-1}+q$, $n\ge1$, where
$R_0$ is arbitrary, $(M_n)$ are  iid copies of a
non--degenerate random variable $M$, $0\le M\le1$,  and $q>0$  is a
constant. We show  that under mild and natural conditions on $M$ the
suitably normalized extremes of $(R_n)$ converge in distribution to a double
exponential random variable. This partially complements a result of
de~Haan, Resnick, Rootz\'en, and de~Vries who considered  extremes of
the sequence $(R_n)$ under the assumption that $\P(M>1)>0$.\end{abstract} 

\maketitle

\section{Introduction}
We consider a special case of  the following random difference equation
 \be\lbl{perp-nq} R_n=Q_n+M_nR_{n-1},\quad n\ge1\ee where
$R_0$ is arbitrary and $(Q_n,M_n)$, $n\ge1$, are  i.i.d. copies of a two dimensional random vector 
$(Q,M)$, and $(Q_n,M_n)$ is independent of $R_{n-1}$. Later on we
specialize our discussion to a non--degenerate $M$, and $Q\equiv q$, a
positive constant. 
Much of the impetus for  studying  equations  like (\ref{perp-nq}) stems from numerous
applications of such schemes in mathematics and other
disciplines of science. 
We refer   to   \cite{eg,vervaat}  for  examples of fields in which equation (\ref{perp-nq}) have been of interest. Further examples of more recent applications are mentioned in  \cite{hw}, and  for examples of statistical issues arising in studying solutions of \eqref{perp-nq} see \cite{unc}. 

A fundamental theoretical result that goes back to Kesten  \cite{kesten} asserts that if 
\be\lbl{convcond} E\ln|M|<0\quad \mbox{and}\quad E\ln|Q|<\infty\ee
  then the sequence $(R_n)$ converges in distribution to a random variable $R$, which necessarily satisfies distributional identity
\be\lbl{id}R\stackrel d= MR+Q\ee
(see  also  \cite{vervaat} for a detailed discussion of the convergence properties of $(R_n)$).
In the same paper Kesten showed that if
$P(|M|>1)>0$ and \eqref{convcond} holds then, under some mild
additional conditions on $M$ and $Q$   the limiting
distribution is always heavy--tailed, that is, 
$\P(|R|>t)\sim
Ct^{-\kappa}$ for a suitably chosen $\kappa>0$.  
A different proof of this results was
given by Goldie in \cite{goldie}. By contrast,  it was shown in
\cite{gg} that in the complementary case $|M|\le1$  if  $|Q|\le q$
then the tail of $R$ has no slower than exponential decay.

Interestingly, much more work has been done in the heavy--tailed
situation. This is perhaps at least partially caused by the fact that
many of the processes appearing in applications (for example GARCH
processes in financial mathematics) are in fact
heavy--tailed. Nonetheless, the case $|M|\le1$ and $Q\equiv q$ contains a number of
interesting situations, including the class of Vervaat perpetuities, see
e.g. 
\cite{vervaat}. Vervaat perpetuities correspond to $M$ being a
$\operatorname{Beta}(\alpha,1)$ random variable for some $\alpha>0$ and $Q=1$ in which case one
gets
\be\lbl{verperp}R\stackrel d=1+M_1+M_1M_2+M_1M_2M_3+\dots\ee 
(some authors prefer not
to have a 1 at the beginning which corresponds to taking
$Q=M$). Particular cases of Vervaat perpetuities include the Dickman
distribution appearing in number theory (see \cite{dickman}), in the
analysis of  the limiting distribution of \texttt{Quickselect}
algorithm (see \cite{mms}), and in the limit theory of functionals of success
epochs in iid sequences of random variables
\cite[Section~4.7]{verber}.   Further connections are referenced
in \cite{ht} and we refer there for more
information.  For recent work on perfect
simulation of Vervaat perpetuities see \cite{fh} or \cite{df}.

In this note we will be interested in the extremal behavior of the sequence $(R_n)$. For any sequence of random variables $(Y_n)$ we let $(Y_n^*)$ be the sequence of partial maxima, i.e. $Y_n^*=\max_{k\le n}Y_k$, $n\ge1$.  With this notation, we will seek constants $(a_n)$ and $(b_n)$ such that  for all $x$
\be\lbl{maxconv} P(a_n(R_n^*-b_n)\le x)\longrightarrow G(x),\quad n\to\infty,\ee
where $G$ is a non--degenerate distribution function.

Under the  assumptions that $\P(M>1)>0$ the extremes of the sequence
$(R_n)$ when both $M$ and $Q$ are non--negative were studied  in
\cite{dhrrdv} and were shown to converge (after suitable
normalization) to Fr\'echet (i.e. Type II) distribution with parameter
$\kappa$. Here, we consider the complementary case, namely of a
light-tailed limiting distribution $R$.  Of course, in this situation
one expects convergence in \eqref{maxconv} to a Gumbel (i.e. a 
double--exponential or Type I) distribution, provided that there is
convergence at all.  The latter need not be the case, however. Indeed, if
$Q=1$ and $M$ has a two--point distribution $\P(M=1)=p=1-\P(M=0)$ then
as is seen from \eqref{verperp} $R$ has a geometric distribution with
parameter $1-p$ and thus no constants $(a_n)$, $(b_n)$ exist for which
 \eqref{maxconv} holds for a non--degenerate distribution $G$ (see \cite[Example~1.7.15]{llr}). Our
 main aim here is to show that under fairly general and natural
 conditions on $M$ (and for a degenerate $Q$) \eqref{maxconv} does
 hold for  suitable constants $(a_n)$, $(b_n)$ and a double
 exponential distribution $G(x)=\exp(-e^{-x})$, $-\infty<x<\infty$.  

\section{Extremal behavior}

Following the authors of
\cite{dhrrdv} we assume that both $M$ and $Q$ are non--negative. 
As  we mentioned earlier we  assume that $Q=q>0$ is a constant. 
 So,  
we consider
\be\lbl{perp-q}R_n=M_nR_{n-1}+q,\quad n\ge1,\quad\mbox{$R_0$ -- given},\ee 
where $M_n$ and $R_{n-1}$ on the right--hand side are independent and
where $(M_n)$ is a sequence of iid copies of a random variable $M$
satisfying
\be\lbl{Mass}0\le M\le 1,\quad  \mbox{$M$ -- non-degenerate}.\ee 
(The non--degeneracy assumption is to eliminate the possibility that $R$ itself
is degenerate.) Clearly, this  is more than \eqref{convcond} and thus implies
the convergence in
distribution of $(R_n)$. Furthermore, it has been known since
\cite{gg} that in that case the tail of the limiting  variable $R$ is
no heavier than exponential. Note that if $M$ is bounded away from 1
then $R$ is actually a bounded random variable. To exclude this
situation we assume that the right endpoint of $M$ is 1, that is
that  
\be\lbl{upend}\sup\{x:\ \P(M>x)>0\}=1.\ee  
Finally, we need to eliminate the possibility that $R$ is a geometric
variable. To this end it is enough 
to assume that 
\be\lbl{nongeom}\P(M=0)=0,\ee
since this guarantees that the distribution of $R$ is continuous (see e.g. \cite[Theorem~1.3]{air}).

We will prove the following theorem
\begin{thm}\lbl{mt}
Let $(R_n)$ satisfy \eqref{perp-q} with $M$ satisfying
\eqref{Mass}--\eqref{nongeom}. Then there exist sequences $(a_n)$
$(b_n)$ such that for every real $x$
\[\P(a_n(R_n^*-b_n)\le x)\to \exp(-e^{-x}),\quad\mbox{as  }
n\to\infty.\]
\end{thm}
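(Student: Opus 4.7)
My plan is to realize $(R_n)$ as a geometrically ergodic stationary Markov chain, extract a tail asymptotic for its limit $R$ that is precise enough to place $R$ in the Gumbel max-domain of attraction, and then apply the classical extreme value theorem for stationary mixing sequences (e.g.~\cite{llr}).

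\textbf{Stationarity and mixing.} Two trajectories of \eqref{perp-q} driven by the same innovations $(M_n)$ but started at $R_0,R_0'$ satisfy $R_n-R_n'=(M_1\cdots M_n)(R_0-R_0')$; by \eqref{Mass} and non-degeneracy of $M$ the product tends to $0$ almost surely, and exponentially fast whenever $\E|\log M|<\infty$. Consequently I may assume without loss of generality that $R_0\stackrel{d}{=}R$, making $(R_n)$ strictly stationary; the partial maxima change only by $o(1)$. The same coupling yields geometric $\beta$-mixing of the chain.

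\textbf{Tail of $R$ and choice of $(a_n,b_n)$.} Iterating \eqref{id} gives $R=q\sum_{k\ge 0}\prod_{j\le k}M_j$, and large excursions of $R$ correspond to long runs of $M_j$ close to $1$. Passing to logarithms converts \eqref{id} into an additive-renewal equation with step $-\log M>0$, and a Cram\'er-type renewal analysis then produces an expansion $-\log\P(R>t)=\psi(t)(1+o(1))$ with $\psi$ determined by the behaviour of $\P(M>1-\varepsilon)$ as $\varepsilon\downarrow 0$ through \eqref{upend}. Combined with the continuity of $F_R$ (which follows from \eqref{nongeom} via \cite[Theorem~1.3]{air}), this places $F_R$ in the Gumbel domain through a von Mises-type condition, and the normalizers can be taken as $n(1-F_R(b_n))\to 1$ and $a_n=n f_R(b_n)$, the standard choice under which i.i.d.\ copies of $R$ have maximum converging to $\exp(-e^{-x})$.

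\textbf{Extremes of the dependent sequence.} I verify Leadbetter's conditions $D(u_n)$ and $D'(u_n)$ at $u_n=b_n+x/a_n$. Condition $D(u_n)$ follows from the geometric $\beta$-mixing above. For the anti-clustering condition $D'(u_n)$, I use the contractive dynamics: after an exceedance $R_k>u_n$ one has $R_{k+j}=(M_{k+1}\cdots M_{k+j})R_k+O(1)$, so a second exceedance at time $k+j$ forces $M_{k+1}\cdots M_{k+j}\ge 1-O(1/u_n)$, and the probability of this event, summed over $1\le j\le n/r$, is $o(1)$ as first $n\to\infty$ and then $r\to\infty$. Invoking the extreme value theorem of \cite{llr} yields the claimed convergence.

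\textbf{Main obstacle.} The technical heart is obtaining a tail expansion for $R$ sharp enough to identify the domain of attraction under only the qualitative hypotheses \eqref{Mass}--\eqref{nongeom}: the bound of \cite{gg} supplies only an exponential upper estimate, while placing $R$ in the Gumbel domain demands matching two-sided asymptotics, which depend delicately on the distribution of $M$ near $1$. A secondary concern is a possible extremal index $\theta<1$ arising from residual clustering; since this merely shifts $b_n$ by $(\log\theta)/a_n$, the limiting shape $\exp(-e^{-x})$ is preserved provided the normalizers are adjusted accordingly.
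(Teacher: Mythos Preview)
Your outline follows the same three-stage strategy as the paper (iid extremes, $D(u_n)$, extremal index), but the two steps you flag as delicate are in fact genuine gaps that your write-up does not close.

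\textbf{Tail of $R$.} An expansion $-\log\P(R>t)=\psi(t)(1+o(1))$ is \emph{not} enough to place $F_R$ in the Gumbel domain: a $(1+o(1))$ error in the exponent can easily destroy the required limit $(1-F_R(t+xg(t)))/(1-F_R(t))\to e^{-x}$, and no von~Mises condition follows without knowing that $F_R$ has a density with the right regularity near $+\infty$, which is not available under \eqref{Mass}--\eqref{nongeom}. The paper does not try to verify a domain-of-attraction criterion for $F_R$ at all. Instead it invokes the two-sided bound of \cite{h}, $\exp\{c_0y\ln p_{c_1/y}\}\le\P(R>y)\le\exp\{c_2y\ln p_{c_3/y}\}$, and uses it to sandwich the exact quantile $u_n(x)=F_R^{-1}(1-e^{-x}/n)$ between two explicit sequences $w_n(x)$ and $v_n(x)$, each of which can be shown directly to admit linear normalizers; the normalizers for $u_n$ are then obtained by interpolation. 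This sandwich/interpolation idea is the key step you are missing.

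\textbf{Anti-clustering.} Your $D'(u_n)$ argument is not valid under the stated hypotheses: nothing in \eqref{Mass}--\eqref{nongeom} rules out $p_0=\P(M=1)>0$, and in that case $\P(M_{k+1}\cdots M_{k+j}\ge 1-O(1/u_n))\ge p_0^j$, so the sum over $j$ does not vanish. Indeed $D'(u_n)$ fails and the extremal index is $\theta=1-p_0<1$. The paper therefore does not attempt $D'(u_n)$ but computes $\theta$ directly via Rootz\'en's criterion \cite{r-jap88}, evaluating $\limsup_n\P(MR+q\le u_n\mid R>u_n)=1-p_0$; only in the special case $p_0=0$ does one recover $\theta=1$. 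A smaller point: your geometric $\beta$-mixing claim rests on $\E|\log M|<\infty$, which is not assumed; the paper verifies $D(u_n)$ by hand using $\E M<1$ (guaranteed by \eqref{Mass}) and the explicit recursion, which avoids this issue.
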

\section{proof of Theorem~\ref{mt}}
We first outline our proof which generally follows the approach  of \cite{dhrrdv} (see also references therein for earlier developments). 
Writing out \eqref{perp-q} explicitly  we see that
\be\lbl{expl}
R_n=q+qM_n+qM_nM_{n-1}+\dots+qM_n\dots M_2+M_n\dots M_1R_0.\ee
Under our assumption \eqref{Mass} (as a matter of fact, under the
first part of \eqref{convcond} as well) the product $\prod_{k=1}^nM_k$
goes to 0 a.s.  Consequently, the extremal behavior of $(R_n)$ is the
same  regardless of the choice of the initial variable $R_0$. It is
particularly convenient to  choose $R_0$ so that it satisfies
\eqref{id} as then so does every $R_k$, $k\ge1$,  making the sequence
$(R_n)$ stationary.  Extremal behavior of stationary sequences is
 quite well understood (see e.g. \cite[Chapter~3]{llr}) and we will
 take advantage of  that. To find the extremal behavior of $(R_n)$ one has to do three things: 
\begin{itemize}
\item[(i)] analyse the extremal behavior of the associated
 independent sequence $(\hat{R}_n)$ consisting of iid random
 variables  equidistributed with $R$,  
\item[(ii)] verify that the
 sequence $(R_n)$ satisfies the $D(u_n)$ condition for 
  sequences $(u_n)$ of the form $u_n=b_n+x/a_n$, for any $x$ and
  suitably chosen sequences $(a_n)$, $(b_n)$, and 
\item[(iii)]  show that the sequence $(R_n)$ has the extremal index
  and find its value. 
\end{itemize}

Some of the difficulties with carrying out this program are caused by
the fact that,
contrary to the heavy -- tailed situation, much less is known about
the tail asymptotics in the  case of light tails. A notable exception
are Vervaat perpetuities (see \cite[Section~]{verber} for a
discussion). General results on light--tail case are scarce (see
\cite{gg,hw,h}) and less precise than Kesten's result in the heavy --
tailed situation. As a consequence, less precise information about
the norming constants $(a_n)$ $(b_n)$ will be available.   Our
substitute for Kesten's result  will be two--sided bounds obtained recently
in \cite{h}. 

We will treat  the three items above in separate subsections.

\subsection{Associated independent sequence}

We  appeal to the general theory of extremes as described in
e.g. \cite[Chapter
~1]{llr}. First,  we know from \cite[Theorem~1.3]{air} that
\eqref{nongeom} and non--degeneracy assumption on $M$ imply that $R$ has 
continuous distribution function $F_R$. Therefore, the condition (1.7.3) of Theorem~1.7.13 of \cite{llr} is
satisfied  and thus, for every 
$x>0$ there exist $u_n=u_n(x)$ such that
\be\lbl{limun}\lim_{n\to\infty}n\P(R>u_n)=e^{-x}.\ee    
In fact, since $R$ is continuous 
$u_n$ may be taken to be 
\[u_n(x)=F_{R}^{-1}(1-\frac{e^{-x}}n),\]
where $F_R $ is the probability distribution function of $R$. The
question now is whether $u_n$'s may be chosen to be linear functions
of $x$ i.e. whether there exist constants $a_n$ and $b_n$, $n\ge1$
such that for $x>0$ we have 
\be\lbl{linu}u_n(x)=\frac x{a_n}+b_n,\qquad n\ge1.\ee  
To address that question we will utilize a recent result of \cite{h}
which states that there exist absolute constants $c_i$, $i=0,1,2,3$ such that
for sufficiently large $y>0$
\[\exp\{c_0y\ln p_{\frac{c_1}y}\}\le\P(R>y)\le\exp\{c_2y\ln p_{\frac{c_3}y}\},
\]
where, following \cite{gg}, for $0<\delta<1$ we set
\be\lbl{pdelta}p_\delta=\P(1-\delta<M\le1)=1-F_M(1-\delta)\quad\mbox{and}\quad p_0=\lim_{\delta\to0}p_\delta=\P(M=1).\ee
Notice that by \eqref{upend} $p_\delta$ is strictly positive for $\delta\in(0,1)$.  
Now, if 
\[\P(R>u_n)=\frac{e^{-x}}n,\]
then 
\[\exp\{c_0u_n\ln p_{\frac{c_1}{u_n}}\}\le\frac{e^{-x}}n.\]
Therefore, if $w_n$ are chosen so that 
\[\exp\{c_0w_n\ln p_{\frac{c_1}{w_n}}\}=\frac{e^{-x}}n,\]
then $u_n\ge w_n$. By the same argument, if $v_n$ are such that 
\[\exp\{c_2v_n\ln p_{\frac{c_3}{v_n}}\}=\frac{e^{-x}}n,\]
then $\P(R>v_n)\le\frac{e^{-x}}n$ so that $u_n\le v_n$. 
Hence for every $x>0$
\[w_n(x)\le u_n(x)\le v_n(x)\]
and thus for every
$n\ge1$ there would exist  $0\le\alpha_n\le1$ such that 
\[u_n=\alpha_nw_n+(1-\alpha_n)v_n.\] 
If both $(v_n)$ and $(w_n)$ were linear, say, 
\[w_n(x)=\frac x{a_n'}+b_n',\quad v_n(x)=\frac
x{a_n''}+b_n'',\]
for some $(a_n')$, $(b_n')$, $(a_n'')$, and $(b_n'')$ then 
\eqref{linu}  would hold  with 
\[a_n=\left(\frac{\alpha_n}{a_n'}+\frac{1-\alpha_n}{a_n''}\right)^{-1}\quad 
\mbox{and} \quad b_n=\alpha_nb_n'+(1-\alpha_n)b_n''.\]
It therefore suffices to show the existence of linear norming for
partial maxima of iid random variables $(W_n)$ whose common
distribution $F_W$ satisfies
\[1-F_W(y)=\exp\{c_0y\ln p_{c_1/y}\},\quad \mbox{for}\quad y\ge y_0,\]
where $p_{c_1/y}$ is given by \eqref{pdelta} for some 
 fixed random variable $M$ satisfying \eqref{Mass}--\eqref{nongeom}.

In accordance with \cite[Theorem~1.5.1]{llr} to show that
\[\P(a_n'(W_n-b_n')\le x)\to\exp(-e^{-x}),\]
holds for every real $x$, the constants $(a_n')$ and $b_n')$ must be
constructed so that for every such $x$
\[n(1-F_W(b_n'+x/a_n'))\to e^{-x},\quad\mbox{as}\quad n\to\infty,\]
i.e. that 
\be\lbl{tailW}n\exp\left\{c_0(b_n'+\frac x{a_n'})\ln
  p_{\frac{c_1}{b_n'+x/a_n'}}\right\}\to e^{-x},\quad\mbox{as}\quad
n\to\infty.\ee
Choose $b_n'$ so that 
\be\lbl{bn}c_0b_n'\ln p_{c_1/b_n'}=-\ln n.\ee
 Then the
left--hand side of \eqref{tailW} is
\[\exp\left\{c_0b_n'\left(\ln
  p_{\frac{c_1}{b_n'+x/a_n'}}-\ln p_{c_1/b_n'}\right)\left(1+\frac x{a_n'b_n'}\right)+c_0\frac x{a_n'}\ln
  p_{c_1/b_n'}\right\}.\]
To choose $(a_n')$ first note that  the difference of logarithms
in the first summand 
is negative. Hence, if for any $n$, $a_n'\le-K\ln p_{c_1/b_n'}$ for some $K<c_0$
then the exponent is no more than $-xc_0/K<-x$. Therefore,
for any admissible choice of $(a_n')$ we must have $\liminf_na_n'/\ln
p_{c_1/b_n'}\le -c_0$ which implies in particular that
$a_n'b_n'\to\infty$. Thus, the exponent in the above formula is asymptotic to
\[c_0b_n'(\ln
  p_{\frac{c_1}{b_n'+x/a_n'}}-\ln p_{c_1/b_n'})+c_0\frac x{a_n'}\ln
  p_{c_1/b_n'}.\]
We can further assume that for each $n$ $1-c/b_n'$ is a
differentiability point of $F_M$ and that the derivative, $f_M$, is
finite at
$1-c_1/b_n'$. It then follows that the exponent is asymptotic
to 
\[-c_0\frac{c_1x}{a_n'b_n'p_{c_1/b_n'}}f_M(1-\frac{c_1}{b_n'})+c_0\frac x{a_n'}\ln
  p_{c_1/b_n'}
\]
and thus we may choose 
\be\lbl{an}a_n'=c_0\left(\frac{c_1}{b_n'p_{c_1/b_n'}}f_M(1-\frac{c_1}{b_n'})-\ln p_{c_1/b_n'}\right) .\ee

\subsection{$D(u_n)$ condition}

To check that $D(u_n)$ condition holds for sequences of the form $b_n+x/a_n$ we proceed in the same fashion as
\cite[proof of Theorem~2.1]{dhrrdv}; the argument there was, in turn,
based on \cite[proof of Lemma~3.1]{r-ap86}. Recall that, according to
\cite[Lemma~3.2.1(ii)]{llr} it suffices to
show that if
$1\le i_1<\dots<i_r<j_1<\dots< j_s\le n$ are such that $j_1-i_r\ge
\lambda n$ for $\lambda>0$ then
\[\P(\bigcap_{k=1}^r\{R_{i_k}\le u_n\}\cap\bigcap_{m=1}^s\{R_{j_m}\le
u_n\})-\P(\bigcap_{k=1}^r\{R_{i_k}\le
u_n\})\P(\bigcap_{k=1}^r\{R_{i_k}\le u_n\})\to0,\]
as $n\to\infty$.  
Set 
$I=\{i_1,\dots,i_r\}$ and $J=\{j_1,\dots,j_s\}$ and for any set $A$ of positive
integers let $R^*_A=\max_{a\in A}R_a$. 

It follows from \eqref{perp-q} that
 for $j>i$ we have 
\begin{eqnarray*}R_j&=&q+qM_j+\dots+qM_j\dots M_{i+2}+M_j\dots M_{i+1}R_i\\
&=:&S_{j,i}+M_j\cdot\dots\cdot M_{i+1}R_i,
\end{eqnarray*}
where, for $j>i$ we have
set 
\[S_{j,i}:=q+qM_j+\dots+qM_j\dots M_{i+2}.\]
 Hence, for  any $\eps_n>0$ 
we obtain 
\begin{eqnarray*}\{R_J^*\le u_n\}&=&\bigcap_{j\in J}\{S_{j,i_r}+M_j\dots M_{i_r+1}R_{i_r}\le u_n\}
\\&\supset& \bigcap_{j\in J}\{S_{j,i_r}\le u_n-\eps_n\}\cap\{M_j\dots M_{i_r+1}R_{i_r}\le \eps_n\}\\&
=&\bigcap_{j\in J}\{S_{j,i_r}\le u_n-\eps_n\}\setminus\bigcup_{j\in J}\{M_j\dots M_{i_r+1}R_{i_r}>\eps_n\}.
\end{eqnarray*}
Note that $R_k$ and $S_{n,m}$ are independent whenever,  $m\ge k$ so that $\{R_i:\ i\in I\}$  and $\{S_{j,i_r}:\ j\in J\}$ are independent, and hence we get
\[P(R_I^*\le u_n,R_J^*\le u_n)\ge P(R_I^*\le u_n)P(S_{J,i_r}^*\le u_n-\eps_n)-P(\bigcup_{j\in J}M_j\dots M_{i_r+1}R_{i_r}>\eps_n).
\]
Also, 
\[\{S^*_{J,i_r}\le u_n-\eps_n\}\supset\{R_J^*\le u_n-2\eps_n\}\cap\bigcap_{j\in J}\{M_j\dots M_{i_r+1}R_{i_r}\le\eps_n\},
\]
which further leads to 
\[P(R_I^*\le u_n,R_J^*\le u_n)\ge P(R_I^*\le u_n)P(R_J^*\le u_n-2\eps_n)-2P(\bigcup_{j\in J}M_j\dots M_{i_r+1}R_{i_r}>\eps_n).
\]
By essentially the same argument we also get 
\[P(R_I^*\le u_n,R^*_J\le u_n)\le P(R_I^*\le u_n)P(R_J^*\le u_n+2\eps_n)+2P(\bigcup_{j\in J}M_j\dots M_{i_r+1}R_{i_r}>\eps_n).
\] 
Combining these two estimates we obtain 
\begin{eqnarray*}&&|P(R_I^*\le u_n,R^*_J\le u_n)- P(R_I^*\le u_n)P(R_J^*\le u_n)|\\&&\quad\le
\max\{P(R_J^*\le u_n)-P(R_J^*\le u_n-2\eps_n), P(R_J^*\le u_n+2\eps_n)-P(R_J^*\le u_n)\}\\&&\qquad+2P(\bigcup_{j\in J}M_j\dots M_{i_r+1}R_{i_r}>\eps_n).
\end{eqnarray*}
Thus condition $D(u_n)$  will be verified once we show that both terms in the sum on the right--hand side
vanish as $n\to\infty$. 
To handle the first term we use stationarity and the fact that $j_s\le n$ to find that the maximum above is bounded by
\[\sum_{j\in J}P(u_n-2\eps_n\le R_j\le u_n+2\eps_n)\le nP(u_n-2\eps_n\le R\le u_n+2\eps_n).\]
Recall that $(u_n)$ satisfy \eqref{limun} and \eqref{linu}.
Thus, setting $\eps_n=\eps/a_n$ with $\eps>0$ sufficiently small we get
\[nP(u_n-2\eps_n\le R\le u_n+2\eps_n)\to
e^{-(x-2\eps)}-e^{-(x+2\eps)}=O(\eps).\]

Turning attention to  the second term,  using  $M_k\le1$ we see that
\begin{eqnarray*}\P(\bigcup_{j\in J}M_j\dots M_{i_r+1}R_{i_r}>\eps/a_n)&\le& \sum_{j\in J}\P(M_{j}\dots M_{i_r+1}R_{i_r}>\eps/a_n)\\&\le&
nP(M_{j_1-i_r}\dots M_1R_0>\eps/a_n).\end{eqnarray*}
Intersect the event underneath this
probability with $\{R>2b_n\}$ and its complement to see that this term  is bounded by
 \be\lbl{r0tail}n\P(R>2b_n)+n\P(M_{j_1-i_r}\dots
 M_1>\eps/(2a_nb_n)).\ee
Furthermore, since for any $T>0$ and  sufficiently large $n$,
$2b_n=b_n+\frac{a_nb_n}{a_n}>b_n+T/a_n$, the first term \eqref{r0tail} is bounded by
\[n\P(R>2b_n)\le n\P(R>b_n+T/a_n)\to e^{-T},
\] 
and thus goes to 0 upon letting $T\to\infty$. Turning to the second term
in \eqref{r0tail} we see that by Markov's inequality and independence
of $M_k$'s it is bounded by
\be\lbl{EMbound}\frac{2na_nb_n}\eps (EM)^{j_1-i_r}.\ee
We need to see that this vanishes as $n\to\infty$. 
 Recall that $EM<1$
and $j_1-i_r\ge\lambda n$ where $\lambda>0$, 
so that $(\E M)^{j_1-i_r}$ decays exponentially fast in $n$.
Furthermore, 
\[a_nb_n\le K\max\{a_n',a_n''\}\cdot\max\{b_n',b_n''\}\]
Recall that $b_n'$ and $b_n''$ satisfy \eqref{bn} (with different
constants). Thus they both are
$O(\ln n)$ as are $\ln p_{c_1/b_n'}$ and $\ln p_{c_3/b_n''}$. 
Hence,
\[a_n'\le
K\left(\frac{c_1}{b_n'}f_M(1-\frac{c_1}{b_n'})\frac1{p_{c_1/b_n'}}+\ln
  n\right).\]
Since $f_M$ is an integrable function, we may assume that
$\frac{c_1}{b_n'}f_M(1-c_1/b_n')=O(1)$ as $n\to\infty$. Finally, recall that $(b_n')$
satisfies \eqref{bn}. 
Therefore,
\[p_{c_1/b_n'}=\exp(-\frac{\ln n}{c_0b_n'})=n^{-1/c_0b_n'}\ge n^{-\alpha},\quad \alpha>0,\]
where the last inequality follows from the fact that $b_n'\to\infty$
as $n\to\infty$ which is evident from \eqref{bn}. 
It follows that $na_nb_n$ has a polynomial growth in $n$ and thus that
for every $\eps>0$ \eqref{EMbound} goes to 0 as $n\to\infty$.

\subsection{Extremal index}
We establish the following fact about  the extremal index of
$(R_n)$. It implies, in particular, that if $M$ does not have an atom
at 1, then  the extremal behavior of $(R_n)$ is exactly the same as it
would be for independent
$R_n$'s.

\begin{pr}
Let $(R_n)$ be a stationary sequence satisfying the recurrence
\eqref{perp-q}. Then $(R_n)$ has the extremal index $\theta$ whose
value is 
\[\theta=\limsup_n\P(MR+q\le u_n| R>u_n)=1-p_0=1-\P(M=1).\]
\end{pr}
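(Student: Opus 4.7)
The plan is to reduce the extremal index to a single-step conditional probability and then evaluate it as $1 - p_0$. The argument has three parts: a reduction using the general theory of extremes for weakly dependent sequences (drawing on the $D(u_n)$ condition already verified in the previous subsection), a trivial upper bound $\theta \le 1 - p_0$ that uses only the atom of $M$ at $1$, and a matching lower bound where the tail estimates of \cite{h} enter.

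\textbf{Reduction and upper bound.} The general characterization in \cite[Chapter~3]{llr} gives
\[
\theta = \lim_n \P\Bigl(\max_{2 \le k \le r_n} R_k \le u_n \,\Big|\, R_1 > u_n\Bigr)
\]
for a suitable $r_n \to \infty$ with $r_n \P(R > u_n) \to 0$. The inequality $\le \P(R_2 \le u_n \mid R_1 > u_n)$ is trivial, and by stationarity and the independence of $M_2$ from $R_1$, this equals $\P(MR + q \le u_n \mid R > u_n)$. The upper bound $\theta \le 1 - p_0$ is now immediate: whenever $M = 1$ and $R > u_n$ we have $MR + q = R + q > u_n$, so $\P(MR + q > u_n \mid R > u_n) \ge p_0$ for every $n$. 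The matching reverse inequality for the reduction --- that $\P(\max_{3 \le k \le r_n} R_k > u_n, R_2 \le u_n \mid R_1 > u_n) = o(1)$ --- follows from the contractive structure of the chain ($M_k \le 1$, so once $R_2 \le u_n$ the chain sits essentially in its stationary regime) together with a union bound combined with $r_n \P(R > u_n) \to 0$. Hence $\theta = \lim_n \P(MR + q \le u_n \mid R > u_n)$.

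\textbf{Lower bound and main obstacle.} Conditioning on $M$ and observing that $\{mR + q \le u_n, R > u_n\}$ is empty when $m \ge 1 - q/u_n$,
\[
\P(MR + q \le u_n \mid R > u_n) = F_M(1 - q/u_n) - \int_{[0,\, 1 - q/u_n)} \frac{\P(R > (u_n - q)/m)}{\P(R > u_n)} \, dF_M(m).
\]
Left-continuity of $F_M$ at $1$ gives $F_M(1 - q/u_n) \to 1 - p_0$, so it remains to show that the integral tends to $0$. For fixed $m \in [0, 1)$, one has $(u_n - q)/m \ge \lambda u_n$ with $\lambda > 1$ for all large $n$; combining the upper bound $\P(R > y) \le \exp\{c_2 y \ln p_{c_3/y}\}$ with the lower bound $\P(R > u_n) \ge \exp\{c_0 u_n \ln p_{c_1/u_n}\}$ from \cite{h}, the ratio $\P(R > \lambda u_n)/\P(R > u_n)$ tends to $0$ once $\lambda$ is sufficiently large, since the dominant factor $u_n \ln p_{c/u_n}$ diverges to $-\infty$. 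Truncating the integral at $m = 1 - \eta$ and sending $\eta \to 0$ after $n \to \infty$, with the residual controlled via left-continuity of $F_M$ at $1$, then completes the argument by dominated convergence. The chief difficulty is that the constants $c_0, c_2$ in the two-sided estimate of \cite{h} need not agree, so $\P(R > \lambda u_n)/\P(R > u_n) \to 0$ is not automatic for $\lambda$ only slightly larger than $1$; the truncation at $m$ bounded away from $1$ is essential to bypass this gap.
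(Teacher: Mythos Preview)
Your reduction to the single-step conditional probability parallels the paper's (which invokes Rootz\'en's criterion \cite[Theorem~4.1]{r-jap88} and factors $\P(R_m^*\le u_n\mid R_0>u_n)$ through the Markov property), and the upper bound $\theta\le 1-p_0$ is fine.

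The gap is in the lower bound, and you have in fact named it yourself without resolving it. The bounds from \cite{h} give
\[
\frac{\P(R>\lambda u_n)}{\P(R>u_n)}\le\exp\bigl\{c_2\lambda u_n\ln p_{c_3/(\lambda u_n)}-c_0u_n\ln p_{c_1/u_n}\bigr\},
\]
and this tends to $0$ only when, roughly, $c_2\lambda>c_0$; for $\lambda\in(1,c_0/c_2)$ the exponent may well diverge to $+\infty$ and the bound is useless. Your truncation at $m=1-\eta$ does \emph{not} bypass this: over $[0,1-\eta)$ you need the ratio to vanish for $\lambda=1/(1-\eta)$, while to kill the residual $\P(1-\eta\le M<1)$ you must afterwards send $\eta\to0$, which forces $\lambda\to1$. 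The two limits pull in opposite directions, and no single $\eta$ makes both errors small.

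The paper avoids the whole issue by conditioning on $R$ rather than on $M$ and by using only the already-established relation $n\P(R>b_n+y/a_n)\to e^{-y}$, never the crude tail bounds of \cite{h}. One writes $\P(MR+q>u_n,\,R>u_n)=\int_{u_n}^\infty\P(Mt+q>u_n)\,dF_R(t)$ and splits at $u_n+t_n$ with $t_n\to\infty$, $t_n=o(b_n)$. The far piece is at most $\P(R>u_n+t_n)$; since $a_nt_n\to\infty$ this is $o(\P(R>u_n))$ directly from the Type~I convergence --- no multiplicative tail comparison is required. The near piece is at most $\P\bigl(M>1-(t_n+q)/(u_n+t_n)\bigr)\P(R>u_n)$, and the first factor tends to $p_0$ because $t_n/u_n\to0$. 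Your own decomposition can be repaired in exactly the same spirit: for each fixed $m<1$ one has $a_n\bigl((u_n-q)/m-u_n\bigr)\to\infty$, so $\P\bigl(R>(u_n-q)/m\bigr)/\P(R>u_n)\to0$ by \eqref{limun}--\eqref{linu}, and then dominated convergence over $[0,1)$ finishes. The point is that the correct tool here is the domain-of-attraction relation already proved, not the two-sided estimate of \cite{h}.
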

\begin{proof}
Again following the authors of \cite{dhrrdv} we rely on Theorem~4.1 of
Rootz\'en \cite{r-jap88}. Since we have shown that $D(u_n)$ holds for
every sequence $u_n$ of the form $b_n+x/a_n$, $x>0$, it
remains to verify condition (4.3) of that theorem i.e. to show that
\[\limsup_{n\to\infty}|\P(R_{\lceil n\eps\rceil}\le
u_n|R_0>u_n)-\theta|\to0,\quad\mbox{as}\quad \eps\searrow0.\]
To  this end,  for given $\eps>0$, let $m:=m_\eps:=\lceil n\eps\rceil$. Then
 \[\P(R_m^*\le u_n|R_0> u_n)=\P(R_m\le u_n|R_{m-1}^*\le u_n,R_0>u_n)\P(R_{m-1}^*\le u_n|R_0>u_n).
 \]
By Markov property, for $m\ge2$ the first  probability on the right--hand side is  
\[\P(R_m\le u_n|R_{m-1}\le u_n)=\P(M_mR_{m-1}+q\le u_n|R_{m-1}\le u_n)=\P(MR+q\le u_n|R\le u_n).\]
Continuing in the same fashion we find that 
\begin{eqnarray*}\P(R_m^*\le u_n|R_0> u_n)&=&\P^{m-1}(MR+q\le u_n|R\le u_n)\P(R_1\le u_n|R_0> u_n)\\
&=&\left(1-\P(MR+q>u_n|R\le u_n)\right)^{m-1}\P(MR+q\le u_n|R> u_n).
\end{eqnarray*}
So, clearly
\[\limsup_n \P(R_m^*\le u_n|R_0> u_n)\le \limsup_n\P(MR+q\le u_n|R>
u_n).\]

On the other hand,
\begin{eqnarray*}n\P(MR+q>u_n|R\le u_n)&=&n\frac{\P(MR+q>u_n,R\le u_n)}{\P(R\le u_n)}\le
n\frac{\P(MR+q>u_n)}{1-\P(R>u_n)}\\
&=&n\frac{\P(R>u_n)}{1-\P(R>u_n)} \to e^{-x},\end{eqnarray*}
as $n\to\infty$ by the very choice of $(u_n)$. Thus 
\[\limsup_nn\P(MR+q>u_n|R\le u_n)\le e^{-x}=:c<\infty\]
so that
\[\liminf_n\left(1-\P(MR+q>u_n|R\le u_n)\right)^{m-1}\ge e^{-c\eps}\]
and hence
\[\limsup_n\P(R_m^*\le u_n|R_0\le u_n)\ge
e^{-c\eps}\limsup_n\P(MR+q\le u_n|R> u_n).\]
It follows that 
\begin{eqnarray*}&&\lim_{\eps\searrow0}\limsup_{n\to\infty}\left\{\left(1-\P(MR+q>u_n|R\le u_n)\right)^{m-1}\P(MR+q\le u_n|R> u_n)\right\}\\
&&\qquad\qquad
=\limsup_{n\to\infty}\P(MR+q\le u_n|R> u_n).\end{eqnarray*}

We now turn to evaluating 
\[\limsup_{n\to\infty}\P(MR+q\le u_n|R> u_n).
\] 
It is clear that if $p_0>0$ then for every $n$ such that $u_n\ge q$ we have $\P(MR+q\le u_n|R>u_n)=1-p_0$ so assume that $p_0=0$ and write 
\[\P(MR+q\le u_n|R> u_n)=1-\P(MR+q>u_n|R>u_n)=1-\frac{\P(MR+q>u_n,\
  R>u_n)}{\P(R>u_n)}.\]
It remains to show that the numerator in the last expression is of
lower order than the denominator.  To do that, let $(t_n)$ be a sequence converging to infinity but in such a way
that $t_n=o(b_n)$. Then 
\begin{eqnarray*}\P(MR+q>u_n,R>u_n)&=&\int_{u_n}^\infty \P(Mt+q>u_n)dF_R(t)\\&=&\left(\int_{u_n}^{u_n+t_n}+\int_{u_n+t_n}^\infty\right)\P(Mt+q>u_n)dF_R(t).
\end{eqnarray*}
Note that the probability underneath the integral is an increasing
function of $t$.  Bounding it trivially by 1 in the second term
we see that this term 
is
bounded by $\P(R>u_n+t_n)$.  This can be further bounded by
\[\P(R>b_n+\frac x{a_n}+t_n)=\P(R>b_n+\frac{x+a_nt_n}{a_n})\le\P(R>b_n+\frac{x+T}{a_n}),\]
whenever $a_nt_n\ge T$. It follows by the choice of $(u_n)$ and  $D(u_n)$ condition that 
\[\frac{\P(R>u_n+t_n)}{\P(R>u_n)}\le e^{-T},\]
for arbitrarily large $T$ and sufficiently large $n$ and thus it vanishes as $n\to\infty$.
The first integral is bounded by 
\[\P(M(u_n+t_n)+q>u_n)\P(u_n<R<u_n+t_n)\le
\P(M>1-\frac{t_n+q}{u_n+t_n})\P(R>u_n).\]
Since the first term goes to $p_0=0$ as $n\to\infty$,
 we see that 
 this term is  $o(\P(R>u_n))$
as $n\to\infty$. 
This shows that the extremal index is 1 when $p_0=0$ and completes the proof.
\end{proof}

\section{Remarks}
{\bf1.} The main drawback of Theorem~1 is that it does not give a good handle on the norming
constants $(a_n)$ and $(b_n)$. This is generally caused by a lack of precise
information about the tails of the limiting random variable
$R$. However, even in the rare cases in which more precise
information about tails of $R$ is available, the formulas seem to be too
complicated to  make the precise statements about 
$(a_n)$ and $(b_n)$ practical. For example, when $q=1$ and $M$ has
$\operatorname{Beta}(\alpha,1)$ distribution, $\alpha>0$, (i.e. $R$ is a Vervaat
perpetuity) then Vervaat \cite[Theorem~4.7.7]{verber} (based on earlier arguments of de Bruijn
\cite{dB}) found the expression for the density of $R$. This, in principle, could be used to get precise enough asymptotics of the tail function of $R$ and thus determine the asymptotic values of $(b_n)$ and $(a_n)$. However, the nature of these formulas, makes obtaining explicit asymptotic expressions for $(a_n)$ and $(b_n)$ difficult if not impossible.  As far as we know, Vervaat perpetuities provide the only class of examples (within our restrictions on  $M$ and $Q$) for which the asymptotics of the tail function is known.  On the other hand, Theorem~\ref{mt}  typically gives the order of the magnitude of $(a_n)$ and $(b_n)$.

{\bf2.} The expression \eqref{an}  for $(a_n')$  often
simplifies to $a_n'\sim -c_0\ln p_{c_1/b_n'}$ (with corresponding simplification for $(a_n)$). This will happen, for
example, whenever $p_0=0$ and $\delta f_M(1-\delta)/p_\delta$ is bounded
as $\delta\to0$, in particular, when $M$ is
$\operatorname{Beta}(\alpha,\beta)$ random variable, $\alpha,\beta>0$. In that
case, $b_n'$ may be chosen to be asymptotic to $\frac{\ln
  n}{c_0\beta\ln\ln n}$  and then
$a_n'\sim c_0\beta\ln\ln n$.  Hence, $(a_n)$ and $(b_n)$ are of order
$\ln\ln n$ and $\ln n/\ln\ln n$, respectively. Note that Vervaat perpetuity corresponds to $\beta=1$ and Dickman distribution to $\alpha=\beta=1$.

{\bf3.} There are, however, situations for which the above remark is not true. The following situation was considered in  \cite[Theorem~6]{hw}.
Let $M$ have density given by
\[f_M(t)=K\exp\{-\frac1{(1-t^r)^{1/(r-1)}}\},\quad 1<r<\infty,\quad 0<t<1,\]
where $K=K_r$ is a normalizing constant. Then, as $\delta\searrow0$,
\[p_\delta\sim(1-(1-\delta)^r)^{r/(r-1)}\exp\{-(1-(1-\delta)^r)^{-1/(r-1)}\}\sim(r\delta)^{r/(r-1)}\exp\{-(r\delta)^{-1/(r-1)}\},\]   
so that 
\[\frac{c_1f_M(1-c_1/b_n')}{b_n'p_{c_1/b_n'}}\sim\left(\frac{b_n'}{c_1r^r}\right)^{1/(r-1)}.\]
On the other hand, 
\[-\ln p_{c_1/b_n'}\sim\left(\frac{b_n'}{c_1}\right)^{1/(r-1)}+\frac r{r-1}\ln(b_n'/rc_1)=
\left(\frac{b_n'}{c_1}\right)^{1/(r-1)}
\left(1+O(\frac{\ln b_n'}{b_n'^{1/(r-1)}})\right),
\]
so that both terms appearing in \eqref{an} are of the same order. Here
again, the norming constants $(a_n)$, $(b_n)$ in Theorem~\ref{mt} may
be determined up to absolute multiplicative factors and are of order
$(\ln n)^{1/r}$ and $(\ln n)^{(r-1)/r}$, respectively. 

{\bf4.} Consider another example from \cite{hw} in which 
\[f_M(t)=K\exp\left(-\int_{1-t}^1\frac{e^{1/s}}sds\right),\quad 0<t<1.\]
Then (see \cite[Lemma~8]{hw}) $\ln p_\delta\sim -\delta e^{1/\delta}$
as $\delta\to0$. Similarly, one can check that 
\[\frac{\delta f_M(1-\delta)}{p_\delta}\sim\frac{\delta e^{-\delta
    e^{1/\delta}}e^{\delta e^{1/\delta}}}{\delta e^{-1/\delta}}=
e^{1/\delta},\]
so this time the first term in the expression \eqref{an} is of 
higher order than the second. 
 It follows from the asymptotics above that $a_n'\sim(\ln n)/c_0c_1$ and
$b_n'\sim c_1\ln\ln n$ and  hence $(a_n)$, $(b_n)$ are of order $\ln n$
and $\ln\ln n$, respectively.  

\bibliographystyle{plain}
\bibliography{maxperp}

\end{document}